\newtheorem{thm}{Theorem}[section]
\newtheorem{cor}[thm]{Corollary}
\newtheorem{lem}[thm]{Lemma}
\newtheorem{prop}[thm]{Proposition}
\newtheorem{pblm}[thm]{Problem}
\newcommand{\abs}[1]{\left\lvert #1 \right\rvert} %
\numberwithin{equation}{section}
\begin{document}

\title{Equitable partition of planar graphs}

\author[1]{Ringi Kim\thanks{
Supported by the National Research Foundation of Korea (NRF) grant funded by the Korea government (MSIT) (NRF-2018R1C1B6003786), and by INHA UNIVERSITY Research Grant.}}
\author[2,3]{Sang-il Oum\thanks{Supported by the Institute for Basic Science (IBS-R029-C1).}}
\author[4]{Xin Zhang\thanks{Supported by the National Natural Science Foundation of China (11871055) and the Youth Talent Support Plan of Xi'an Association for Science and Technology, China (2018-6).}}
\affil[1]{\small Department of Mathematics, Inha University, Incheon, Korea}
\affil[2]{\small Discrete Mathematics Group, 
Institute for Basic Science (IBS), Daejeon,~Korea}
\affil[3]{\small Department of Mathematical Sciences, KAIST, Daejeon,~Korea}
\affil[4]{\small School of Mathematics and Statistics, Xidian University, Xi'an~710071,~China}
\affil[ ]{\small Email: \texttt{ringikim@inha.ac.kr},
\texttt{sangil@ibs.re.kr}, 
\texttt{xzhang@xidian.edu.cn}}
\date\today

\maketitle

\begin{abstract}
    An \emph{equitable $k$-partition} of a graph $G$ is a collection of induced subgraphs $(G[V_1],G[V_2],\ldots,G[V_k])$ of $G$ such that $(V_1,V_2,\ldots,V_k)$ is a partition of $V(G)$
    and $-1\le |V_i|-|V_j|\le 1$ for all $1\le i<j\le k$.
    We prove that 
    every planar graph admits 
    an equitable $2$-partition into $3$-degenerate graphs,
    an equitable $3$-partition into 
    $2$-degenerate graphs, 
    and an equitable $3$-partition into
    two forests and one graph.

\bigskip
\noindent \emph{Keywords: induced forest; degenerate graph; equitable partition; planar graph}.

\end{abstract}

\section{Introduction}\label{sec:intro}

All graphs in this paper are simple and finite.
A \emph{$k$-partition} of a graph $G$ is 
a collection of 
induced subgraphs $(G[V_1],G[V_2],\ldots,G[V_k])$
such that 
$(V_1,V_2,\ldots,V_k)$ is a partition of $V(G)$. Such a $k$-partition is \emph{equitable} 
if 
\[\abs{\abs{V_i}-\abs{V_j}}\le 1\] for all $i, j\in\{1,2,\ldots,k\}$. %
If there is no confusion, then we  use $(V_1,V_2,\ldots,V_k)$ to denote a $k$-partition $(G[V_1],G[V_2],\ldots,G[V_k])$ of $G$.
We write $\Delta(G)$ to denote the maximum degree of a graph $G$. 

In 1970, Hajnal and Szemer\'edi~\cite{HS1970} proved
a conjecture of Erd\H{o}s, stating that every graph admits an equitable $k$-partition into empty subgraphs, if $k>\Delta(G)$. 
In 2008, Kierstead and Kostochka~\cite{KK2008} found a short proof.
In 2010, Kierstead, Kostochka, Mydlarz, and Szemer\'edi~\cite{KKMS2010} designed a fast algorithm to find such an equitable $k$-partition.
The bound on $k$ in the Hajnal-Szemer\'edi Theorem is sharp because of complete graphs for instance. Thus, there have been many results in this field trying to obtain better lower  bounds on the number $k$ of parts for special graph classes.
Motivated by Brooks' theorem, 
Chen, Lih, and Wu~\cite{CLW1994} conjectured that 
a connected graph~$G$ admits an equitable $\Delta(G)$-partition into empty graphs if and only if it is not 
$K_{\Delta(G)+1}$, an odd cycle, or $K_{\Delta(G),\Delta(G)}$ (for odd $\Delta(G)$).
They proved this conjecture for $\Delta(G)\le 3$ and Kierstead and Kostochka~\cite{KK2012} proved the conjecture for $\Delta(G)=4$.
For planar graphs, Zhang and Yap~\cite{ZhangYap1998} proved this conjecture for $\Delta(G)\ge 13$, and Nakprasit~\cite{Nakprasit20121019} 
 proved it for $\Delta(G)\ge 9$; in other words, he proved that every planar graph $G$  
 has an equitable $k$-partition into empty subgraphs if $k\ge \max(\Delta(G),9)$.

If we relax the condition on each part, then 
it is possible to reduce the number of parts significantly.
For instance, Williams, Vandenbussche, and Yu~\cite{WVY2012} proved that 
for all $k\ge 3$, every planar graph of minimum degree at least $2$ and girth at least $10$
has an equitable $k$-partition into graphs of maximum degree at most $1$.

We will mostly focus on the degeneracy of graphs.
A graph is \emph{$d$-degenerate}
if every non-null subgraph has a vertex of degree at most $d$. Note that a graph is $0$-degenerate if it has no edges, and $1$-degenerate if it is a forest.
Kostochka, Nakprasit, and Pemmaraju~\cite{KNP2005} studied the existence of an equitable $k$-partition
of a $d$-degenerate graph into $(d-1)$-degenerate graphs.
\begin{thm}[Kostochka, Nakprasit, and Pemmaraju~\cite{KNP2005}]\label{thm:degenerate}
    For $k\ge3$ and $d\ge2$,     
    every $d$-degenerate graph has an equitable $k$-partition into $(d-1)$-degenerate subgraphs.
\end{thm}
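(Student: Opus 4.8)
The plan is to induct on $n=|V(G)|$. Two elementary facts will be used throughout: an induced subgraph of a $(d-1)$-degenerate graph is again $(d-1)$-degenerate, and if $H$ is $(d-1)$-degenerate and $u$ has at most $d-1$ neighbors in $H$, then $H$ together with $u$ is still $(d-1)$-degenerate (place $u$ last in a degeneracy ordering). Since $G$ is $d$-degenerate it has a vertex $v$ of degree at most $d$; the graph $G-v$ is $d$-degenerate and smaller, so by the induction hypothesis it has an equitable $k$-partition $(V_1,\dots,V_k)$ into $(d-1)$-degenerate subgraphs. It remains to reinsert $v$ while preserving both equitability and the degeneracy of each class.

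First I would dispose of the easy cases. The classes eligible to receive $v$ while keeping the partition equitable are exactly the smallest classes of $G-v$. If $v$ has at most $d-1$ neighbors, then by the second fact $v$ may be added to any smallest class and we are done, so assume $v$ has exactly $d$ neighbors. If there are at least two smallest classes, then since at most one class can contain all $d$ neighbors of $v$, some smallest class $V_i$ contains at most $d-1$ of them, and adding $v$ to $V_i$ works. The remaining, genuinely hard, case is therefore: $k\mid n$, there is a \emph{unique} smallest class $V_1$ (so $v$ is forced into $V_1$), and all $d$ neighbors of $v$ lie in $V_1$.

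To repair this case I would keep the other classes almost intact and perform a $2$-for-$2$ exchange. Put $v$ into $V_1$, move a neighbor $x\in N(v)\subseteq V_1$ out to some class $V_j$ with $j\neq1$, and move a vertex $w\in V_j$ into $V_1$; all class sizes are then preserved, so the partition stays equitable. Because every neighbor of $v$ lies in $V_1$, the vertex $v$ is nonadjacent to $w$ and to all of $V_j$, which simplifies the bookkeeping: the new class $V_1'=(V_1\setminus\{x\})\cup\{v,w\}$ is $(d-1)$-degenerate provided $w$ has at most $d-1$ neighbors in $V_1\setminus\{x\}$ (observe $v$ itself now has only $d-1$ neighbors there), and the new class $V_j'=(V_j\setminus\{w\})\cup\{x\}$ is $(d-1)$-degenerate provided $x$ has at most $d-1$ neighbors in $V_j\setminus\{w\}$. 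Thus the whole problem reduces to locating a neighbor $x$ of $v$, a class $V_j\neq V_1$, and a vertex $w\in V_j$ satisfying these two local degree conditions.

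The main obstacle is proving that such an exchange always exists. A naive count of the bipartite graph between $V_1$ and the remaining classes produces a suitable $w$ with few neighbors in $V_1$ when $d=2$, but this crude bound already fails for $d\ge3$, so I do not expect a single exchange to suffice in general. The resolution I would pursue is an augmenting argument in the spirit of Hajnal--Szemer\'edi: if no single swap works, build an alternating chain of moves through several classes and argue that it can be closed off without destroying $(d-1)$-degeneracy in any class. This is exactly where the hypothesis $k\ge3$ enters, since it guarantees at least two classes other than $V_1$ through which to route the chain; making the chain terminate while controlling the at-most-$(d-1)$ degree conditions at every class along the way is the crux of the proof.
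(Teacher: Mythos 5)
This statement is quoted in the paper from Kostochka, Nakprasit, and Pemmaraju~\cite{KNP2005}; the paper itself contains no proof of it, so your attempt can only be judged on its own merits. On those merits it has a genuine gap: you set up the induction, correctly dispose of the easy cases, and correctly isolate the hard configuration (namely $k\mid n$, a unique smallest class $V_1$, and all $d$ neighbours of the low-degree vertex $v$ lying in $V_1$), but you then explicitly do not resolve that configuration. You observe yourself that a single $2$-for-$2$ exchange cannot be certified by counting edges between $V_1$ and the other classes once $d\ge 3$ (a $d$-degenerate bipartite graph on $N$ vertices can have up to $dN-\binom{d+1}{2}$ edges, so no class is forced to contain a vertex with at most $d-1$ neighbours in $V_1$), and the proposed remedy --- ``an augmenting argument in the spirit of Hajnal--Szemer\'edi'' --- is named but not carried out. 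Since you acknowledge that this alternating-chain step is ``the crux of the proof,'' what you have is a reduction to the hard case, not a proof. Designing such a chain so that every intermediate class retains $(d-1)$-degeneracy is precisely the nontrivial content of the theorem, and it is not clear that the chain can always be closed off in your single-vertex framework.

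A structural remark that may help you: the one-vertex-at-a-time induction is what creates the ``unique smallest class'' trap in the first place. The standard device for equitable partitions (used in \cite{KNP2005} and also in the proof of Theorem~\ref{thm:esperet} in \cite{ELM2015}) is to remove $k$ vertices at once --- e.g.\ the last $k$ vertices of a degeneracy ordering --- apply induction, and then add exactly one new vertex to each of the $k$ classes, so equitability is automatic and the problem becomes one of finding a system of distinct representatives: each new vertex has at most $d$ back-neighbours, hence at most one ``forbidden'' class, so each is compatible with at least $k-1$ of the $k$ classes. Even there a bad case remains (all $k$ new vertices forbidding the same class), and handling it is where the hypothesis $k\ge 3$ genuinely enters; so whichever framework you choose, the final obstruction still requires an argument you have not supplied.
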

This implies that every $5$-degenerate graph admits 
an equitable $3$-partition into $4$-degenerate subgraphs,
an equitable $9$-partition into $3$-degenerate subgraphs,
an equitable $27$-partition into $2$-degenerate subgraphs,
and 
an equitable $81$-partition into forests.

Now we restrict our attention to planar graphs.
As planar graphs are $5$-degenerate, every planar graph  admits an equitable $81$-partition into forests. 
How far can we reduce 81? 
Esperet, Lemoine, and Maffray~\cite{ELM2015} proved 
that $81$ can be improved to $4$.
\begin{thm}[Esperet, Lemoine, and Maffray~\cite{ELM2015}]
    \label{thm:esperet}
    For all $k\ge 4$, every planar graph admits an equitable $k$-partition
    into forests.
\end{thm}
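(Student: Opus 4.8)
To prove Theorem~\ref{thm:esperet} I would colour the vertices of a planar graph $G$ one at a time along a degeneracy ordering, keeping each colour class acyclic and the class sizes balanced. Since planar graphs are $5$-degenerate, fix an ordering $v_1,v_2,\ldots,v_n$ of $V(G)$ in which each $v_i$ has at most five \emph{back-neighbours}, that is, neighbours among $v_1,\ldots,v_{i-1}$; in particular $v_n$ has degree at most $5$ in $G$, which is convenient for an induction on $n$. Writing $n=kq+r$ with $0\le r<k$, the goal is a partition into $k$ forests, $r$ of size $q+1$ and $k-r$ of size $q$.

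The forest constraint is easy to control. Call a colour \emph{safe} for $v_i$ if $v_i$ has at most one back-neighbour in that colour. If every vertex is placed in a colour that is safe for it, then no colour class can contain a cycle, since the last vertex of such a cycle in the ordering would have had at least two same-coloured back-neighbours when it was coloured. As $v_i$ has at most five back-neighbours, at most $\lfloor 5/2\rfloor=2$ colours fail to be safe for it, so for every $k\ge 4$ there are at least $k-2\ge 2$ safe colours at each step. Hence acyclicity by itself never blocks the colouring; this is where the hypothesis $k\ge 4$ enters, and larger $k$ only provides more safe colours.

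The real content is reconciling safety with equitability, which I would handle by induction on $n$. Delete $v_n$, apply the induction hypothesis to the planar graph $G-v_n$ to obtain an equitable partition into $k$ forests with the sizes prescribed by $n-1$, and then reinsert $v_n$. Passing from $n-1$ to $n$, exactly one class is permitted to grow by one vertex, so ideally $v_n$ is added to a class that is simultaneously safe for it and allowed to grow. The obstruction is that the classes allowed to grow may all be unsafe for $v_n$ while every safe class must keep its size. To break this deadlock I would place $v_n$ in a safe class $A$ and then transport the resulting surplus to a deficient class $B$ along a chain of single-vertex moves $A\to B_1\to\cdots\to B$, each move shifting a vertex with at most one neighbour in its destination so that all classes stay forests and only the sizes of $A$ and $B$ change. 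Showing that such an augmenting chain always exists and terminates, while respecting acyclicity in every class, is the main obstacle; it is exactly the kind of rebalancing used in the Hajnal--Szemer\'edi theorem~\cite{HS1970} and in the proof of Theorem~\ref{thm:degenerate} by Kostochka, Nakprasit, and Pemmaraju~\cite{KNP2005}, and I expect to need planarity, beyond mere $5$-degeneracy, to keep the set of moves under control.
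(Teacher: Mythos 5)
Your greedy/degeneracy-ordering observation is sound as far as it goes: with at most five back-neighbours per vertex, at most two of the $k\ge 4$ colours can contain two or more back-neighbours of $v_i$, so at least two classes remain safe and a greedy assignment yields \emph{some} partition into $k$ forests. But this only re-proves that the vertex arboricity of planar graphs is at most $3$; the entire difficulty of Theorem~\ref{thm:esperet} is the equitability, and that is precisely the step you leave open. In your inductive reinsertion of $v_n$, the class permitted to grow may be unsafe for $v_n$, and you propose to repair this with an augmenting chain of single-vertex moves, each shifting a vertex with at most one neighbour in its destination class. You give no argument that such a chain exists or terminates: a priori every vertex of the surplus class could have two or more neighbours in every deficient class (and likewise along any intermediate class), and nothing in your setup rules this out. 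The degeneracy ordering is of no help here, since the partition of $G-v_n$ returned by the induction hypothesis bears no relation to that ordering. The Hajnal--Szemer\'edi and Kostochka--Nakprasit--Pemmaraju rebalancing arguments you invoke are exactly the delicate core of those proofs and do not transfer for free to induced forests; the difficulty of making such local exchanges preserve acyclicity is one reason Problem~\ref{prbm:planar} (three parts) remains open. So the proposal, as written, has a genuine gap at its central step.

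The route taken by Esperet, Lemoine, and Maffray (and re-derived in Section~\ref{sec:2F+1G} of this paper) sidesteps rebalancing entirely: by Borodin's Theorem~\ref{thm:borodin} a planar graph has an acyclic $5$-colouring, so the union of any two colour classes induces a forest, and the problem reduces to the purely numerical task of repartitioning five sets into parts of sizes $\lfloor n/k\rfloor$ or $\lceil n/k\rceil$, each contained in the union of two of the five sets (Proposition~\ref{prop:main} and Corollary~\ref{cor:k-1}). Once the graph structure has been absorbed into the acyclic colouring, equitability is achieved by elementary counting with no further graph-theoretic obstruction. If you wish to pursue your approach instead, the missing lemma you must actually prove is the existence of the vertex-exchange chain that preserves acyclicity in every class; that is where all the work lies.
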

However it is not known whether $4$ is tight. Indeed,
Esperet, Lemoine, and Maffray~\cite{ELM2015} proposed the following problem: 
\begin{pblm}[Esperet, Lemoine, and Maffray~\cite{ELM2015}]\label{prbm:planar}
    Does every planar graph $G$ admit 
    an equitable $3$-partition into forests?
\end{pblm}

This problem still remains open and is known to have affirmative answers in the following cases:
\begin{itemize}
    \item 
    $G$ is $2$-degenerate, by Theorem~\ref{thm:degenerate}  (even if $G$ is non-planar),
    \item 
    the girth of $G$ is at least $5$, due to Wu, Zhang, and Li~\cite{WZL2013},
    \item 
    no two cycles of length at most $4$ share vertices in $G$, due to Zhang~\cite{Zhang2015}, 
    \item $G$ has no triangles, and 
    no two cycles of length $4$ are adjacent, due to Zhang~\cite{Zhang2015},
    \item $G$ has an acyclic $4$-coloring, due to Esperet, Lemoine, and Maffray~\cite{ELM2015}.
\end{itemize}

By relaxing the condition further, we may ask the following question. 
\begin{pblm}
    For each $i$, what is the minimum integer $k_i$ such that for all integers $k\ge k_i$, every planar graph admits an equitable $k$-partition into 
    $i$-degenerate subgraphs?
\end{pblm}
It is easy to see that $k_0=\infty$ by considering $K_{1,n}$ for large $n$, see Meyer~\cite{Meyer1973}.
Since every planar graph is $5$-degenerate, $k_i=1$ for all $i\ge 5$. 
Theorem~\ref{thm:esperet} implies that $k_1\le 4$.
Not every planar graph admits a (not necessarily equitable) $2$-partition into forests, shown by Chartrand and Kronk~\cite{CK1969}. 
Thus, $k_1\ge 3$.

Our first and second theorems prove that $k_3=k_4=2$ and $k_2\in\{2,3\}$.

\begin{restatable*}{thm}{threedeg}\label{two-3-degenerate}
    Every planar graph 
    admits an equitable $2$-partition
    into $3$-degenerate graphs.
\end{restatable*}

\begin{restatable*}{thm}{twodeg}\label{three-2-degenerate}
    Every planar graph 
    admits an equitable $3$-partition
    into $2$-degenerate graphs.
\end{restatable*}

Our third theorem shows a weaker variant of Problem~\ref{prbm:planar}.

\begin{restatable*}{thm}{twoforest}\label{thm:twoforest}
    Every planar graph 
    admits an equitable $3$-partition
    into two forests and one graph.
\end{restatable*}

The rest of this paper is organized as follows.
In Section~\ref{sec:2-degenerate}, we prove Theorems~\ref{two-3-degenerate} and \ref{three-2-degenerate}, and moreover, show that every triangle-free planar graph admits an equitable $2$-partition into $2$-degenerate graphs. In Section~\ref{sec:2F+1G}, we prove Theorem~\ref{thm:twoforest} and illustrate some discussions towards Problem~\ref{prbm:planar} and its relative problems.

\section{Equitable partition into degenerate graphs} \label{sec:2-degenerate}
For a graph $G$ and disjoint sets $U$, $V$ of vertices of $G$, we denote by $e_G(U,V)$ the number of edges between $U$ and $V$. 
If $U=\{u\}$ or $V=\{v\}$, then we simply write $e_G(u,V)$ or $e_G(U,v)$ for $e_G(U,V)$.
For a vertex set $S\subseteq V(G)$ and vertices $v \in S$ and $u \notin V(G)- S$, let us write $S-v$ for the set $S-\{v\}$ and $S+u$ for the set $S\cup\{u\}$.

Our first theorem shows that $k_3=k_4=2$.
\threedeg

\begin{proof}
Let $G$ be an $n$-vertex planar graph.
We proceed by induction on  $\abs{E(G)}$. 
We may assume that $G$ has at least one edge and $n\ge 4$.

As $G$ is planar, 
it has a vertex $v$ 
such that $0<\deg(v)\le5$.
Let $v_1$ be a neighbor of $v$. By the induction hypothesis, 
there is an equitable $2$-partition $(V_1,V_2)$ of  $G-vv_1$ into  $3$-degenerate graphs. 
We may assume, without loss of generality, that $v\in V_1$.  
If $e_G(v,V_1-v)\leq 3$, then $(V_1,V_2)$ is an equitable $2$-partition of $G$ into $3$-degenerate graphs. %
So we may assume that 
$e_G(v,V_1-v)\geq 4$, and so $e_G(v,V_2)\leq 1$. Therefore,
$V_2+v$ induces a $3$-degenerate subgraph of $G$.

If there is a vertex $w\in V_2$ so that $e_G(w,V_1-v)\leq 3$, then $(V_1-v+w,V_2-w+v)$ is an equitable $2$-partition  of $G$ into $3$-degenerate graphs. %
Hence we assume that $e_G(w,V_1-v)\geq 4$ for every $w\in V_2$, which implies that
\[
    e_G(V_2,V_1-v)\geq 4|V_2| \ge 4 \lfloor n/2\rfloor \ge 2n-2.%
\]
On the other hand, the graph induced by the edges between $V_1-v$ and $V_2$ is a bipartite planar graph on $n-1$ vertices, and therefore 
$ e_G(V_2,V_1-v)\leq 2(n-1)-4=2n-6$, contradicting the other inequality.
\end{proof}

Now we show that $2\le k_2\le 3$.
\twodeg

\begin{proof}
Let $G$ be an $n$-vertex planar graph. %
We proceed by induction on $\abs{E(G)}$.
We may assume that $G$ has at least one edge and at least $4$ vertices.

Since $G$ is planar, there is a vertex $v$ such that 
$1\le \deg(v)\le 5$.
Let $v_1$ be a neighbor of $v$. 
By applying the induction hypothesis to the graph $G-vv_1$, we obtain an equitable $3$-partition $(V_1,V_2,V_3)$ of 
$G-vv_1$ into  $2$-degenerate graphs. %
We may assume, without loss of generality, that $v\in V_1$.  
If $e_G(v,V_1-v)\leq 2$,
then $(V_1,V_2,V_3)$ is also an equitable $3$-partition of $G$ into $2$-degenerate graphs. %
So we may assume that  $e_G(v,V_1-v)\geq 3$, which implies that $e_G(v,V_2)\leq 2$ and $e_G(v,V_3)\leq 2$. Therefore,
both $V_2+ v$ and $V_3+v$ induce $2$-degenerate subgraphs of $G$.

If there is a vertex $w\in V_2$ so that $e_G(w,V_1-v)\leq 2$, then $V_1-v+w$ induces a 2-degenerate subgraph of $G$. 
Hence, $(V_1-v+w, V_2-w+v, V_3)$ is an equitable $3$-partition of $G$ into $2$-degenerate graphs.
Now we assume that $e_G(w,V_1-v)\geq 3$ for every $w\in V_2$, and by symmetry, we assume further that $e_G(w,V_1-v)\geq 3$ for every $w\in V_3$.
This implies that
\[
    e_G(V_2\cup V_3,V_1-v)\geq 3|V_2\cup V_3|\ge 3 \cdot 2\lfloor n/3  \rfloor
    \ge 2(n-2).
\]

On the other hand, the graph induced by the edges between $V_2\cup V_3$ and $V_1-v$ is a bipartite planar graph on $n-1$ vertices, 
and therefore 
\[
e_G(V_2\cup V_3,V_1-v)\leq 2(n-1)-4,
\]
contradicting the previous inequality.
\end{proof}

We do not know whether $k_2=2$. %

\begin{pblm}\label{prb:22partition}
Does every planar graph admit an equitable $2$-partition into $2$-degenerate graphs?
\end{pblm}

We remark that Thomassen~\cite{Thomassen1995} proved that 
every planar graph admits a $2$-partition into 
a $2$-degenerate graph and a forest.

The following theorem shows that a possible counterexample to Problem~\ref{prb:22partition} shall contain triangles.  

\begin{thm}\label{two-2-degenerate-girth}
    Every triangle-free planar graph
    admits an equitable $2$-partition into $2$-degenerate graphs.
\end{thm}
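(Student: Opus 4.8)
The plan is to mimic the edge-deletion induction used for Theorems~\ref{two-3-degenerate} and~\ref{three-2-degenerate}, now exploiting that a triangle-free planar graph is sparser. First I would record that, since every face of a triangle-free plane graph has length at least $4$, Euler's formula gives $\abs{E(G)}\le 2n-4$ for an $n$-vertex triangle-free planar graph $G$; in particular $G$ is $3$-degenerate and has a vertex $v$ with $1\le\deg(v)\le 3$. Proceeding by induction on $\abs{E(G)}$, I delete an edge $vv_1$, apply the induction hypothesis to obtain an equitable $2$-partition $(V_1,V_2)$ of $G-vv_1$ into $2$-degenerate graphs, and assume $v\in V_1$.

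If $e_G(v,V_1-v)\le 2$ then $(V_1,V_2)$ already works, so I may assume $\deg(v)=3$ and that all three neighbors of $v$ lie in $V_1$; hence $e_G(v,V_2)=0$ and $V_2+v$ is $2$-degenerate. If $v$ lies in the strictly larger part, moving $v$ to $V_2$ finishes the proof, so the remaining case is $\abs{V_1}\le\abs{V_2}$. Here I would try to swap $v$ with a vertex $w\in V_2$: if some $w\in V_2$ satisfies $e_G(w,V_1-v)\le 2$, then $(V_1-v+w,\,V_2-w+v)$ is again an equitable $2$-partition into $2$-degenerate graphs, since $V_1-v+w$ then extends the $2$-degenerate graph $G[V_1-v]$ and $V_2-w+v$ is a subgraph of the $2$-degenerate graph $V_2+v$.

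The hard part will be the case in which every $w\in V_2$ has $e_G(w,V_1-v)\ge 3$. The naive count gives $e_G(V_2,V_1-v)\ge 3\abs{V_2}$, but the bipartite planar graph between $V_1-v$ and $V_2$ has at most $2(n-1)-4$ edges, and since $3\cdot\tfrac12<2$ these two bounds do not conflict for large $n$. This is exactly where the slack of the general-graph argument (coefficient $4$ against the planar density slope $2$) disappears, and I expect this to be the main obstacle: a single swap governed by the crude sufficient condition $e_G(w,V_1-v)\le 2$ is simply too weak here.

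To overcome it I would refine the notion of an admissible swap and then force a good configuration by discharging. For the swap, the point is that $w$ may be moved into $V_1-v$ whenever $G[(V_1-v)+w]$ has no subgraph of minimum degree at least $3$, which is strictly weaker than $e_G(w,V_1-v)\le 2$; since $G$ is triangle-free the neighborhood of $w$ is independent, and combined with the $2$-degenerate structure of $G[V_1-v]$ and planarity this should make many vertices $w$ with $e_G(w,V_1-v)=3$ admissible. To finish, I would pass to a minimal counterexample, use the easy reductions to conclude that it has minimum degree $3$ and admits no admissible swap, and then run a discharging argument based on $\abs{E(G)}\le 2n-4$: the bound forces many degree-$3$ vertices and a $4$-face incident to several of them, and I would argue that such a local configuration always yields an admissible swap that simultaneously preserves $2$-degeneracy of both parts and the equitable balance, contradicting minimality. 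Designing this list of reducible configurations, and matching each one with a balance-preserving swap, is the step I expect to be the crux.
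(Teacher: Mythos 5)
Your plan stalls exactly where you say it does, and the remaining step is not a routine verification but the entire content of the theorem. Everything up to the counting is fine: you correctly observe that with the triangle-free edge bound the inequality $3\abs{V_2}\le e_G(V_2,V_1-v)\le 2(n-1)-4$ is no longer contradictory, so the single-swap edge-deletion scheme of Theorems~\ref{two-3-degenerate} and~\ref{three-2-degenerate} genuinely breaks. But your proposed repair --- a weaker admissibility condition for swaps plus an unspecified discharging argument on degree-$3$ vertices and $4$-faces, ending with an unspecified list of reducible configurations --- is left entirely open. Nothing in the proposal establishes that an admissible $w$ exists in the hard case, nor that the local configurations you hope discharging will produce actually admit balance-preserving swaps. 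As written, this is a description of an obstacle together with a research programme, not a proof.

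For comparison, the paper avoids swapping altogether by changing the induction and the reducible configuration. It inducts on the number of vertices and proves, by a short vertex-discharging argument using only $2e(G)<4n$ (no faces needed), that a triangle-free planar graph has either a vertex of degree at most $2$ or an edge $uv$ with $\deg(u)=3$ and $\deg(v)\le 6$. In the first case one deletes the low-degree vertex and inserts it into the smaller part. In the second case one deletes \emph{both} $u$ and $v$, applies induction to $G-\{u,v\}$, and then adds one vertex to each part: since $v$ has at most five neighbours left, some part $V_1$ contains at most two of them, so $V_1+v$ is $2$-degenerate, and $u$ (of degree $3$, with $v$ as one neighbour) has at most two neighbours in $V_2$, so $V_2+u$ is $2$-degenerate; equitability is automatic because each part grows by exactly one. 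This two-vertex reduction is the idea your proposal is missing, and it is what makes the sparser edge bound usable.
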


\begin{proof}

We first prove by using the  discharging method that every triangle-free planar graph contains a vertex of degree at most $2$ or an edge with one end having degree $3$ and the other having degree at most $6$. 

Suppose there exists a triangle-free planar graph $G$ not satisfying the condition above. 
We assign initial charge $\mu(v)=\deg(v)$ to each vertex~$v$ of~$G$, 
and let each vertex of degree at least $7$ send $1/3$ to each of its neighbors of degree $3$. 
After this discharging, the final charge $\mu'(v)$ of each vertex $v$ of degree $3$ is exactly $3+3\times 1/3=4$ 
since vertices of degree $3$ are only adjacent  to vertices of degree at least $7$. 
If $v$ is a vertex of degree in $\{4,5,6\}$, then $\mu'(v)=\mu(v)\geq 4$. Lastly, each vertex $v$ of degree at least $7$ has final charge $\mu'(v)\ge \deg(v)-\deg(v)/3\geq 14/3>4$. Since $G$ is triangle-free and has minimum degree at least $3$,
\[
    4n>2e(G)=\sum_{v\in V(G)}\deg(v)=\sum_{v\in V(G)}\mu(v)=\sum_{v\in V(G)}\mu'(v)\ge 4n,
\]
 a contradiction. Therefore, the claim holds.

Now we prove the theorem.

Let $G$ be an $n$-vertex triangle-free planar graph. We proceed by induction on $n$. 
If there is a vertex $v$ of degree at most two, then by applying the induction hypothesis to the graph $G-v$, we obtain an equitable $2$-partition $(V_1,V_2)$ of 
$G-v$ into $2$-degenerate graphs with $|V_1|\le |V_2|$. 
Then, $(V_1+v, V_2)$ is an equitable $2$-partition of $G$ into $2$-degenerate graphs. %

So we assume that every vertex of $G$ has degree at least $3$.
Then, by the claim, there is an edge $uv$ with $\deg(u)=3$ and $\deg(v)\leq 6$. 
Applying the induction hypothesis to the graph $G-\{u,v\}$, we obtain an equitable $2$-partition $(V_1,V_2)$ of $G-\{u,v\}$ into two 2-degenerate graphs. %
Since $v$ has at most five neighbors in $G-\{u,v\}$, 
we may assume that $V_1$  contains at most two neighbors of $v$.
Then $(V_1+v,V_2+u)$ is an equitable $2$-partition of $G$ into $2$-degenerate graphs.
This completes the proof.
\end{proof}

\section{Equitable partition into $2$ forests and $1$ graph}\label{sec:2F+1G}
In this section, we aim to prove the following theorem.

\twoforest

An \emph{acyclic $k$-coloring} of a graph is a proper $k$-coloring
such that there is no cycle consisting of two colors.
In other words, if a graph has an acyclic $k$-coloring, then 
its vertex set can be partitioned into $k$ independent sets $A_1$, $A_2$, $\ldots$, $A_k$
such that $A_i\cup A_j$ induces a forest for all 
$i, j\in\{1,2,\ldots,k\}$. %
Borodin proved the following theorem, initially conjectured by Gr\"unbaum~\cite{Grunbaum1973}.
\begin{thm}[Borodin~\cite{Borodin1976}]\label{thm:borodin}
    Every planar graph has an  acyclic $5$-coloring.
\end{thm}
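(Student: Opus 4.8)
The plan is to deduce the theorem from Borodin's acyclic $5$-colouring (Theorem~\ref{thm:borodin}) together with the fact that the third part is allowed to be an arbitrary graph. First I would record a reduction: it suffices to find two vertex-disjoint induced forests $F_1,F_2$ in $G$ with $|F_1|,|F_2|\ge\lceil n/3\rceil$. Indeed, every induced subgraph of a forest is again a forest, so I can then trim $F_1$ and $F_2$ down to the two smaller of the three part-sizes demanded by an equitable $3$-partition (all of which lie in $\{\lfloor n/3\rfloor,\lceil n/3\rceil\}$ and hence never exceed $\lceil n/3\rceil$), collect all remaining vertices into the third part $V_3$, and obtain an equitable partition into two forests and one graph. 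A short arithmetic check confirms that $|V_3|$ then equals the third part-size in each residue class of $n$ modulo $3$.

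Next I would fix an acyclic $5$-colouring $A_1,\dots,A_5$ with $|A_1|\ge\cdots\ge|A_5|$, and use the defining property that $G[A_i\cup A_j]$ is a forest for every pair $i\ne j$, while each $A_i$ is independent. This settles two easy regimes. If $|A_1|+|A_2|\ge 2\lceil n/3\rceil$, then $G[A_1\cup A_2]$ is a single forest large enough to be split into two disjoint forests of size $\lceil n/3\rceil$, since any partition of a forest's vertex set yields two forests. If instead $|A_2|+|A_3|\ge\lceil n/3\rceil$, then, using $|A_1|+|A_2|\ge 2n/5\ge\lceil n/3\rceil$, I can exhibit two disjoint class-pairs whose induced forests each have at least $\lceil n/3\rceil$ vertices; a case check on the sorted sizes shows such a pairing always exists in this regime.

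The remaining, lopsided regime — where $|A_1|+|A_2|<2\lceil n/3\rceil$ and $|A_2|+|A_3|<\lceil n/3\rceil$ — is where the real work lies, and I expect it to be the main obstacle. Here $|A_4|+|A_5|<\lceil n/3\rceil$, so almost all vertices lie in $A_1\cup A_2\cup A_3$; in fact $|A_1|+|A_2|+|A_3|\ge 2\lceil n/3\rceil$, while $G[A_1\cup A_2\cup A_3]$ has vertex arboricity two, being the union of the forest $A_1\cup A_2$ and the independent set $A_3$. My approach would be to let $F_1$ be any $\lceil n/3\rceil$ vertices of the forest $A_1\cup A_2$, and then grow a second forest $F_2$ inside the leftover of $A_1\cup A_2$ together with $A_3$: starting from $A_3$ (all of whose components are singletons) I would repeatedly adjoin a vertex whose neighbours in the current forest lie in distinct components, aiming to reach size $\lceil n/3\rceil$ while keeping $F_1$ and $F_2$ disjoint.

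The crux is to guarantee that this growth actually reaches $\lceil n/3\rceil$ vertices. A naive degree count does not suffice: the planar bipartite bound $e\le 2(n-1)-4$ exploited in the earlier proofs of this section only prevents the process from stalling once the partially built forest is already comparable in size to $A_3$, whereas in the lopsided regime one may a priori be forced to adjoin many vertices of $A_3$. I therefore expect the decisive step to be a more careful transfer argument — choosing the split $A_1\cup A_2=F_1\sqcup K$ and the vertices of $A_3$ simultaneously, and exploiting both the independence of $A_3$ and the sparsity of the planar graph $G[A_1\cup A_2\cup A_3]$ — so as to certify an induced forest of size $\lceil n/3\rceil$ that uses at most $|A_1|+|A_2|-\lceil n/3\rceil$ vertices of $A_1\cup A_2$. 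Establishing this balancing statement is the part I expect to be technically hardest; everything else reduces to the bookkeeping described above.
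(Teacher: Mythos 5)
Your proposal does not address the statement at hand. The statement to be proved is Borodin's theorem itself --- that every planar graph admits an acyclic $5$-colouring --- yet your very first sentence announces that you will \emph{deduce the theorem from Borodin's acyclic $5$-colouring (Theorem~\ref{thm:borodin})}. That is circular: you assume as a hypothesis precisely the result you were asked to establish. What you have actually sketched is an argument for Theorem~\ref{thm:twoforest} (an equitable $3$-partition of a planar graph into two forests and one graph), which in the paper is a \emph{consequence} of Borodin's theorem, obtained there via Proposition~\ref{prop:main} applied with $k=5$ and $\ell=2$. Nothing in your write-up engages with acyclic colourability: there is no discharging, no reducible configurations, no induction on planar triangulations --- none of the machinery that Borodin's (notoriously long) proof requires. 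The paper itself does not reprove Borodin's theorem; it cites it, so there is no internal proof to compare against, but in any case a proof of this statement must start from planarity alone and produce five colour classes with no bichromatic cycle.

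Even judged as an attempt at Theorem~\ref{thm:twoforest}, the argument is incomplete by your own admission: the ``lopsided regime'' where $|A_1|+|A_2|<2\lceil n/3\rceil$ and $|A_2|+|A_3|<\lceil n/3\rceil$ is left unresolved, and the forest-growing process you describe is exactly the step you concede you cannot yet certify. The paper avoids this difficulty entirely by a purely set-theoretic counting argument (Proposition~\ref{prop:main} and Lemma~\ref{lem:number}) that selects pairs of colour classes of the right sizes without ever growing a forest vertex by vertex; you may find it instructive that no structural argument inside $G[A_1\cup A_2\cup A_3]$ is needed there. But the primary defect remains that you have proved (partially) the wrong theorem while assuming the right one.
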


To prove Theorem~\ref{thm:esperet}, 
Esperet, Lemoine, and Maffray used Theorem~\ref{thm:borodin}
and try to combine two color classes in an acyclic $5$-coloring of planar graphs
to produce large induced forests.
We extend their idea.

\subsection{Key proposition}\label{subsec:prop}
To prove Theorem~\ref{thm:twoforest}, we prove the following stronger statement. 

\begin{prop}\label{prop:main}
    Let $k> \ell\ge 1$ be integers.
    Let $A_1$, $A_2$, $\ldots$, $A_k$ be sets
    such that $\abs{\bigcup_{i=1}^k A_i}=n$.
    Let \[q= \left\lfloor \frac{2}{k+\ell-1}\left(n+\frac{k-\ell}{2}\right)\right\rfloor.\]
    Then there exists 
    a partition $(B_0,B_1,\ldots,B_\ell)$ of $\bigcup_{i=1}^k A_i$ 
    into $\ell+1$  sets, possibly empty,
    such that
    \begin{enumerate}[(i)]
    \item    for each $1\le i\le \ell$, 
    $B_i$ is a subset of the union of two members of $\{A_1,A_2,\ldots,A_k\}$,
    \item $\abs{B_i}\ge q+1$ if $1\le i\le \lceil n- \frac{k+\ell-1}{2}q\rceil$,
    \item $\abs{B_i}\ge q$ if $\lceil n- \frac{k+\ell-1}{2}q\rceil< i\le \ell$,
    \item there exists $I\subseteq \{1,2,\ldots,k\}$ with $\abs{I}=k-\ell-1$
    such that $B_0\subseteq \bigcup_{i\in I} A_i$.
    \end{enumerate} 
\end{prop}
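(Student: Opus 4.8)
The plan is to prove Proposition~\ref{prop:main} by induction on $\ell$, peeling off one block at a time and reducing to the same statement with parameters $(k-1,\ell-1,n-|B_\ell|)$. First I would reduce to the case that $A_1,\dots,A_k$ are pairwise disjoint, by replacing each $A_i$ with $A_i\setminus\bigcup_{j<i}A_j$; this only shrinks the sets, so conditions (i) and (iv) for the shrunken sets imply them for the originals, while $\bigcup_i A_i$ and its size $n$ are unchanged. Since empty sets can be thrown into $I$ at the very end, I would further assume every $A_i$ is nonempty. Writing $a_i=|A_i|$ and relabeling so that $a_1\ge a_2\ge\cdots\ge a_k$, I would reinterpret (ii)--(iii) as the requirement to build $\ell$ blocks, of which the first $r:=\lceil n-\frac{k+\ell-1}{2}q\rceil$ have size at least $q+1$ and the remaining $\ell-r$ have size at least $q$, each block being a subset of the union of two of the $A_i$.

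The inductive step builds one block $B_\ell$ of the current largest target $\tau\in\{q,q+1\}$ from at most two classes while completely exhausting one of them, so that deleting the used elements yields an instance with one fewer class and one fewer block. If $\tau\ge a_2$, I would take all of $A_2$ together with $\tau-a_2$ elements of $A_1$, consuming $A_2$; if $\tau<a_2$, I would take all of the smallest class $A_k$ together with $\tau-a_k$ elements of $A_1$, consuming $A_k$. In either case $B_\ell$ lies in the union of two classes and has size exactly $\tau$, and (unless $\tau=a_1+a_2$ forces both to empty) one class survives, so the residue is a genuine instance of the proposition for $(k-1,\ell-1,n-\tau)$. The base case $\ell=1$ is direct: take $B_1=A_1\cup A_2$ and put the rest into $B_0\subseteq\bigcup_{i\ge 3}A_i$, so that $|I|=k-2=k-\ell-1$, using only that $a_1+a_2$ meets the single target.

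The engine that makes the step always feasible is the inequality $a_1+a_2\ge \frac{2n}{k}$, which holds because the mean of the two largest values is at least the global mean $n/k$. Together with a floor estimate extracted from the definition of $q$, namely $\frac{2n}{k}\ge\tau$, this guarantees $a_1+a_2\ge\tau$, so a block of size $\tau$ can always be assembled from the two largest classes; symmetrically $a_k\le\lfloor n/k\rfloor\le q\le\tau$ shows the smallest class fits inside one block. These two bounds are structural and survive the recursion, since each residual instance is again of the stated form, so the induction is self-contained once the base case and the parameter bookkeeping are in place.

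The hard part will be the arithmetic of tracking the pair $(q,r)$ through the recursion. Writing $N=2n+k-\ell$ and $D=k+\ell-1$ with $N=qD+t$ where $0\le t<D$, one finds that after removing a block the denominator drops by exactly $2$, so I would need to verify that $q(k-1,\ell-1,n-\tau)$ stays equal to $q$ and that the oversized-block count decreases from $r$ to $r-1$ (when $\tau=q+1$) or stays consistent (when $\tau=q$); both hinge on the residue of $t$ modulo $D-2$. I also expect to handle the two boundary annoyances separately: the degenerate case $\tau=a_1+a_2$, where the step instead lands on $(k-2,\ell-1)$ and shifts the numerator by one, and small values of $n$, where the floor bound $\frac{2n}{k}\ge q+1$ needed for an oversized block must be checked by hand rather than by the asymptotic gap $\frac{2n}{k}-\frac{2n}{k+\ell-1}=\frac{2n(\ell-1)}{k(k+\ell-1)}$. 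Verifying these floor and ceiling identities uniformly for all admissible $(k,\ell,n)$ is where the real work lies, but none of it requires anything beyond careful elementary estimates.
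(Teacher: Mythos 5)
Your proposal is correct and follows essentially the same route as the paper: induction on $\ell$, peeling off one block contained in the union of two of the classes while exhausting one class entirely so that the residual instance has parameters $(k-1,\ell-1,n-\abs{B_\ell})$, with feasibility of each step guaranteed by the pigeonhole bound $a_1+a_2\ge\lceil 2n/k\rceil\ge q$ (and $\ge q+1$ whenever an oversized block is still owed, since then $2n>(k+\ell-1)q\ge kq$) together with $a_k\le\lfloor n/k\rfloor\le q$. The floor/ceiling bookkeeping you defer is precisely the content of the paper's Lemma~\ref{lem:number} and does check out; the only tactical difference is that you remove the largest block first so that $q$ stays fixed and the oversized count drops by one, whereas the paper removes a block of size exactly $q$ and lets the residual threshold rise to $q+1$ exactly when all remaining blocks must be oversized.
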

Let us first see why Proposition~\ref{prop:main} 
together with Theorem~\ref{thm:borodin}
implies Theorem~\ref{thm:twoforest}
\begin{proof}[Proof of Theorem~\ref{thm:twoforest}]
    Let $G$ be a planar graph with $n$ vertices.
    Then $G$ has an acyclic $5$-coloring by Theorem~\ref{thm:borodin} and so 
    there exist sets $A_1$, $A_2$, $\ldots$, $A_5$ such that $A_1\cup A_2\cup A_3\cup A_4\cup A_5=V(G)$
    and $A_i\cup A_j$ induces a forest for all $1\le i<j\le 5$. 
    By applying Proposition~\ref{prop:main} with $k:=5$ and $\ell:=2$, we have a partition $(B_0,B_1,B_2)$ of $V(G)$ such that 
    $\abs{B_1},\abs{B_2}\ge q$ 
    and 
    both $G[B_1]$ and $G[B_2]$ are forests, 
    where $q=\lfloor \frac{2}{6}(n+\frac{3}{2})\rfloor
    =\lfloor \frac{n+1}{3}\rfloor$.
    If $n\not\equiv2\pmod3$, then we take $B_1'\subseteq B_1$ and $B_2'\subseteq B_2$ such that $\abs{B_1'}=\abs{B_2'}=\lfloor n/3\rfloor$. 
    If $n\equiv2\pmod3$, then we take $B_1'\subseteq B_1$ and $B_2'\subseteq B_2$ such that $\abs{B_1'}=\abs{B_2'}=\lfloor n/3\rfloor+1$. Let $B_0'=V(G)\setminus (B_1'\cup B_2')$. 
    Then $(B_0',B_1',B_2')$ is a desired equitable $3$-partition.
\end{proof}

Here is a key lemma to prove Proposition~\ref{prop:main} inductively.

\begin{lem}\label{lem:number}
    Let $2\le \ell<k$ and $n$ be positive integers and let
    $q=\lfloor \frac{2n+k-\ell}{k+\ell-1}\rfloor$ and $n'=n-q$.
    Then 
    \begin{align*}
        \left\lfloor \frac{2n'+k-\ell}{k+\ell-3}
        \right\rfloor
        &=
        \begin{cases}
        q+1
        &\text{if }\lceil n- \frac{k+\ell-1}{2}q\rceil\ge \ell-1,
        \\
        q
        &\text{otherwise,}
        \end{cases}
    \\
    \intertext{and}
    n-\frac{k+\ell-1}{2}q &= n'-\frac{k+\ell-3}{2}q.
   \end{align*}
\end{lem}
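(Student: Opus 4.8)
The plan is to route the entire computation through a single Euclidean division, after disposing of the second displayed identity, which is pure algebra. For that identity I would simply substitute $n'=n-q$ and simplify: $n'-\frac{k+\ell-3}{2}q = n-q-\frac{k+\ell-3}{2}q = n-\frac{k+\ell-1}{2}q$, since $1+\frac{k+\ell-3}{2}=\frac{k+\ell-1}{2}$. This is one line and needs no case analysis.

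For the first (main) identity, I would introduce the abbreviations $N:=2n+k-\ell$ and $D:=k+\ell-1$, so that $q=\lfloor N/D\rfloor$ by definition, and let $r:=N-qD$ be the remainder, so that $0\le r\le D-1$. The crucial algebraic observation is that the new numerator is
\[
    2n'+k-\ell=(2n+k-\ell)-2q=N-2q=q(D-2)+r,
\]
and the new denominator is exactly $D-2=k+\ell-3$. Hence
\[
    \left\lfloor \frac{2n'+k-\ell}{k+\ell-3}\right\rfloor
    = q+\left\lfloor \frac{r}{D-2}\right\rfloor,
\]
which reduces everything to the single quantity $\lfloor r/(D-2)\rfloor$.

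Next I would pin down this correction term. Since $\ell\ge 2$ and $k\ge\ell+1\ge 3$, we have $D=k+\ell-1\ge 4$, so $0\le r\le D-1<2(D-2)$ and therefore $\lfloor r/(D-2)\rfloor\in\{0,1\}$, equal to $1$ precisely when $r\ge D-2$. It then remains to match this threshold with the stated ceiling condition, and for this I would re-express the quantity inside the ceiling in terms of $r$: from $r=N-qD=2n+k-\ell-qD$ one gets $2n-qD=r-(k-\ell)$, hence
\[
    n-\frac{k+\ell-1}{2}q=\frac{r-(k-\ell)}{2}.
\]
Because $r-(k-\ell)$ is an integer, $\lceil n-\frac{k+\ell-1}{2}q\rceil\ge \ell-1$ holds iff $r-(k-\ell)\ge 2\ell-3$, i.e.\ iff $r\ge k+\ell-3=D-2$. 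This is exactly the case split governing $\lfloor r/(D-2)\rfloor$, so the two sides of the claimed identity agree in both cases.

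The only real obstacle here is bookkeeping rather than any conceptual difficulty: one must confirm that $r$ genuinely lies in $\{0,\dots,D-1\}$, that $D\ge 4$ forces $\lfloor r/(D-2)\rfloor\le 1$ (so that the alternative is strictly binary between $q$ and $q+1$), and that the ceiling inequality converts to the clean integer inequality $r\ge D-2$ without an off-by-one slip. Each of these is elementary, but a single miscount would shift the boundary between the two cases, so I would state the range of $r$ and the bound $D\ge 4$ explicitly before taking floors and ceilings.
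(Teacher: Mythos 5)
Your proposal is correct and follows essentially the same route as the paper: write $2n+k-\ell=qD+r$ with $0\le r<D=k+\ell-1$, observe that the new numerator equals $q(D-2)+r$ so the new floor is $q+\lfloor r/(D-2)\rfloor$, and match the condition $r\ge D-2$ with the stated ceiling inequality. Your write-up is in fact slightly more careful than the paper's, which asserts the equivalence of the ceiling condition with $r\ge k+\ell-3$ without the explicit integer-arithmetic justification you supply.
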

\begin{proof}
Let $m:=k+\ell-1$, and let $r$ be the integer
such that $2n+k-\ell=mq+r$ and $0\le r< m$. 
Then
\[
\left\lfloor \frac{2n'+k-\ell}{k+\ell-3}
       \right\rfloor = \left\lfloor \frac{2(n-q)+k-\ell}{m-2}
        \right\rfloor
    = \left\lfloor \frac{(m-2)q+r}{m-2}\right\rfloor
    = q+\left\lfloor \frac{r}{m-2}\right\rfloor.
\]
Note that $\lceil n-mq/2\rceil \ge \ell-1$
if and only if $r\ge k+\ell-3=m-2$. Thus the first equation holds.
The second equation is trivial.
This completes the proof.
\end{proof}

\begin{proof}[Proof of Proposition~\ref{prop:main}]
    We first observe that $\lceil n- \frac{k+\ell-1}{2}q\rceil\le \ell-1$, since 
    \[
    \frac{2n-2\ell+2}{k+\ell-1} \le \left\lfloor \frac{(2n-2\ell+2)+(k+\ell-2)}{k+\ell-1}\right\rfloor =\left\lfloor\frac{2n+k-\ell}{k+\ell-1}\right\rfloor=q.
    \]
    We may assume that $A_i\cap A_j=\emptyset$ for all $i\neq j$.
    We proceed by induction on~$\ell$.
    Let $a_i=\abs{A_i}$ for all $1\le i\le k$.
    
    If $\ell=1$, then 
    by the pigeonhole principle, there exist $1\le i<j\le k$
    such that $a_i+a_j\ge \lceil \frac{2n}{k}\rceil$.
    Observe that \[q=\left\lfloor 
    \frac{2}{k} \left( n+\frac{k-1}{2}\right)\right\rfloor
    = \left\lfloor \frac{2n}{k}+\frac{k-1}{k}\right\rfloor 
    = \left\lceil \frac{2n}{k}\right\rceil
    \]
    and therefore we take  a set $B_1= A_i\cup A_j$.
    Then $\abs{B_1}\ge q$
    and $B_0$ is the union of the $k-2$ members of $\{A_1,A_2,\ldots,A_k\}\setminus \{A_i,A_j\}$.
    Now we may assume that $\ell>1$.
    
    Suppose that there exist $i\neq j$ such that
    $a_i\le q \le a_i+a_j$.
    Without loss of generality, let us assume $i=1$ and $j=2$.
    Then there exists $B_\ell$ such that 
    $A_1\subseteq B_\ell\subseteq A_1\cup A_2$
    and $\abs{B_\ell}=q$.
    Let \[n'=n-q\quad\text{ and }\quad q'=\left\lfloor \frac{2}{k+\ell-3} \left( n'+\frac{k-\ell}{2}\right)\right\rfloor.\]
    By applying the induction hypothesis to the $k-1$ sets $A_2-B_\ell, A_3, A_4,\ldots, A_k$, we obtain a partition 
    $(B_0,B_1,B_2, \ldots, B_{\ell-1})$ of $\bigcup_{i=2}^k A_i - B_\ell$,
    such that 
    for $1\le i\le \ell-1$, 
    the set $B_i$ is a subset of the union of two members of $\{A_2,A_3,\ldots,A_k\}$
    and  $B_0$ is a subset of the union of $k-\ell-1$ members of  $\{A_2,A_3,\ldots,A_k\}$.
    If $\lceil n-\frac{k+\ell-1}{2}q\rceil = \ell-1$, then by Lemma~\ref{lem:number},
    $q'=q+1$ and therefore the induction hypothesis provides that 
    $\abs{B_i}\ge q+1$ for all $1\le i\le \ell-1$.
    If $\lceil n-\frac{k+\ell-1}{2}q\rceil< \ell-1$, 
    then again by Lemma~\ref{lem:number}, $q'=q$ and $n-\frac{k+\ell-1}{2}q=n'-\frac{k+\ell-3}{2}q'$  and therefore 
    we deduce (ii) and (iii) by the induction hypothesis.

    Thus, we may assume that for all $i\neq j$, 
     $a_i>q$ or $a_i+a_j<q$.
    Note that if $a_i>q$, 
    then     $a_i+a_j     >q$ and therefore $a_j>q$.
    Thus we deduce that either 
    \begin{enumerate}[(I)]
    \item     $a_i> q$
    for all $1\le i\le k$,
    or 
    \item 
    $   a_i+a_j< q$     for all $i\neq j$. 
    \end{enumerate}
    By the pigeonhole principle there exist $1\le i<j\le k$
    such that $a_i+a_j\ge \lceil \frac{2n}{k}\rceil$.
    Since 
    \[
    \left\lceil \frac{2n}{k}\right\rceil
    =
    \left\lfloor\frac{2}{k}\left( n+ \frac{k-1}{2}\right)\right\rfloor
    \ge 
    \left\lfloor \frac{2}{k+\ell-1} \left(n+\frac{k-\ell}{2}\right)\right\rfloor=q,
    \]
 (II) does not hold and so (I) holds. Then we take $B_i=A_i$ for $i=1,2,\ldots,\ell-1$, $B_\ell=A_\ell\cup A_{\ell+1}$,
 and $B_0=\bigcup_{i=\ell+2}^k A_i$.
\end{proof}

Proposition~\ref{prop:main} is best possible in the sense that we cannot increase $q$;
consider the case that $A_1, A_2,\ldots, A_k$ are disjoint sets with  $\abs{A_1}=\ell a$,
$\abs{A_2}=\abs{A_3}=\cdots=\abs{A_k}=a$ for some positive integer $a$.
There are no $\ell$ disjoint sets of size at least $2a+1$, each contained in the  union of two members of $\{A_1,\ldots,A_k\}$, so we cannot increase $q$ from $2a$ to $2a+1$.

When $\ell=k-1$, then we obtain the following from Proposition~\ref{prop:main}, 
which is due to Esperet, Lemoine, and Maffray~\cite{ELM2015}.
\begin{cor}[Esperet, Lemoine, and Maffray~\cite{ELM2015}]
    \label{cor:k-1}
    Let $k>1$ be an integer. 
    Let $A_1$, $A_2$, $\ldots$, $A_k$ be sets
    such that $\abs{\bigcup_{i=1}^k A_i}=n$.
    Then there exists 
    a partition $(B_1,\ldots,B_{k-1})$ of $\bigcup_{i=1}^k A_i$ 
    into $k-1$  sets
    such that
    for each $1\le i\le k-1$, 
    $B_i$ is a subset of the union of two members of $\{A_1,A_2,\ldots,A_k\}$,
    and $\abs{B_i}=\lceil \frac{n}{k-1}\rceil$ or 
    $\abs{B_i}=\lfloor \frac{n}{k-1}\rfloor$ 
\end{cor}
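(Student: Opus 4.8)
The plan is to derive Corollary~\ref{cor:k-1} directly from Proposition~\ref{prop:main} by specializing to $\ell := k-1$, and then upgrading the lower bounds guaranteed by the proposition to exact equalities via a short counting argument.

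First I would substitute $\ell = k-1$ into the formula for $q$. Since $k+\ell-1 = 2k-2$ and $k-\ell = 1$, this gives
\[
q = \left\lfloor \frac{2}{2k-2}\left(n+\frac12\right)\right\rfloor
= \left\lfloor \frac{2n+1}{2(k-1)}\right\rfloor
= \left\lfloor \frac{n}{k-1}\right\rfloor,
\]
where the last equality follows by writing $n = (k-1)s + t$ with $0\le t< k-1$ and observing that $0 \le 2t+1 < 2(k-1)$, so the fractional part contributes nothing.

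Next I would dispose of $B_0$. Condition~(iv) of Proposition~\ref{prop:main} provides a set $I$ with $\abs{I} = k-\ell-1 = 0$ and $B_0\subseteq \bigcup_{i\in I}A_i$; as the empty union is empty, $B_0 = \emptyset$. Hence $(B_1,\ldots,B_{k-1})$ is itself a partition of $\bigcup_{i=1}^k A_i$ into $k-1$ sets, each contained in the union of two members of $\{A_1,\ldots,A_k\}$ by~(i).

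Finally comes the counting step. With $\ell = k-1$ we have $\frac{k+\ell-1}{2} = k-1$, so the threshold appearing in~(ii)--(iii) is $\lceil n - (k-1)q\rceil = n-(k-1)q = t$, the remainder of $n$ modulo $k-1$. Thus exactly $t$ of the sets satisfy $\abs{B_i}\ge q+1$ and the remaining $k-1-t$ satisfy $\abs{B_i}\ge q$, and the sum of these lower bounds is $t(q+1)+(k-1-t)q = (k-1)q+t = n = \sum_i \abs{B_i}$. Since the guaranteed lower bounds already account for all $n$ elements, every inequality must be an equality: $t$ sets have size exactly $q+1 = \lceil \frac{n}{k-1}\rceil$ and the other $k-1-t$ have size exactly $q = \lfloor \frac{n}{k-1}\rfloor$, which is what the corollary asserts. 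I do not expect any genuine obstacle here: the entire content lies in the two arithmetic verifications above — that $q = \lfloor n/(k-1)\rfloor$ and that the lower bounds telescope to $n$ — after which the statement is an immediate specialization of Proposition~\ref{prop:main}.
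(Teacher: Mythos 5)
Your proposal is correct and follows essentially the same route as the paper: specialize Proposition~\ref{prop:main} to $\ell=k-1$, compute $q=\lfloor n/(k-1)\rfloor$, observe $B_0=\emptyset$, and identify the threshold with the remainder $n-(k-1)q$. Your final counting step (summing the lower bounds to $n$ to force equality) makes explicit a point the paper states without justification, which is a small but welcome improvement in rigor.
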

\begin{proof}
    We apply Proposition~\ref{prop:main} with $\ell=k-1$
    to obtain $(B_0,B_1,\ldots,B_{k-1})$. Then $q=\lfloor \frac{1}{k-1}(n+\frac{1}{2})\rfloor=\lfloor \frac{n}{k-1}\rfloor$. 
    As $B_0$ is a subset of the union of $0$ sets, $B_0=\emptyset$.
    And, $\lceil n-\frac{k+\ell-1}{2}q\rceil
    = n-(k-1)q$ is exactly 
    the remainder when dividing $n$ by $k-1$ and therefore
    $\abs{B_i}=q+1$ for all $1\le i\le \lceil n-\frac{k+\ell-1}{2}q\rceil$
    and $\abs{B_i}=q$ for all $\lceil n-\frac{k+\ell-1}{2}q\rceil<i\le k-1$.
    Thus $(B_1,B_2,\ldots,B_{k-1})$ is a desired partition.
\end{proof}

\subsection{Discussions}\label{subsec:disc}

Borodin and Ivanova~\cite{BI2013} and Chen and Raspaud~\cite{CR2013} independently showed that every planar graph without cycles of length $4$ or $5$
has an acyclic $4$-coloring.
By Corollary~\ref{cor:k-1}, 
if a planar graph has 
    no cycle of length $4$ or $5$, then it 
    admits an equitable $3$-partition into forests.
By Proposition~\ref{prop:main}, we have the following variation of Problem~\ref{prbm:planar}.
\begin{cor}
    If a planar graph has 
    no cycle of length $4$ or $5$, 
    then it admits a partition of its vertex set
    into three sets $A_1$, $A_2$, $A_3$
    such that
    \begin{enumerate}[(i)]
     \item  both $A_1$ and $A_2$ induce forests and    $\abs{A_1}, \abs{A_2}\ge \lfloor \frac{2}{5}(n+1)\rfloor$, 
     \item $A_3$ is independent.
    \end{enumerate}
 \end{cor}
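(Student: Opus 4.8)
The plan is to combine the acyclic $4$-coloring guaranteed for planar graphs without cycles of length $4$ or $5$ with Proposition~\ref{prop:main}, applied this time with parameters $k:=4$ and $\ell:=2$. By the theorem of Borodin--Ivanova and Chen--Raspaud cited above, such a planar graph $G$ on $n$ vertices has an acyclic $4$-coloring; that is, there are independent sets $A_1, A_2, A_3, A_4$ partitioning $V(G)$ such that $A_i\cup A_j$ induces a forest for all $1\le i<j\le 4$. This is precisely the hypothesis needed to feed into Proposition~\ref{prop:main}.

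Next I would compute $q$ for these parameters. With $k=4$ and $\ell=2$ we get
\[
q=\left\lfloor \frac{2}{k+\ell-1}\left(n+\frac{k-\ell}{2}\right)\right\rfloor
=\left\lfloor \frac{2}{5}\left(n+1\right)\right\rfloor,
\]
which matches the bound $\lfloor \frac{2}{5}(n+1)\rfloor$ appearing in conclusion~(i). Proposition~\ref{prop:main} then yields a partition $(B_0,B_1,B_2)$ of $V(G)$ with $\abs{B_1},\abs{B_2}\ge q$, where each $B_i$ (for $i=1,2$) is contained in the union of two of the color classes $A_1,\ldots,A_4$; since any two such classes induce a forest, each $G[B_i]$ is an induced subgraph of a forest and hence itself a forest. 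I would then set $A_1:=B_1$, $A_2:=B_2$, and $A_3:=B_0$, giving the two forest-inducing sets of size at least $\lfloor \frac{2}{5}(n+1)\rfloor$ demanded by~(i).

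It remains to verify~(ii), that $A_3=B_0$ is independent. Here I would invoke conclusion~(iv) of Proposition~\ref{prop:main}: there is a set $I\subseteq\{1,2,3,4\}$ with $\abs{I}=k-\ell-1=1$ such that $B_0\subseteq\bigcup_{i\in I}A_i$. Thus $B_0$ is contained in a single color class $A_i$, which is independent by the definition of an acyclic (in particular proper) coloring, so $B_0$ induces an edgeless graph. This is the crux of the argument, and the reason $k=4$ rather than $k=5$ is used: only with $k-\ell-1=1$ does the ``leftover'' set $B_0$ land inside one color class and hence remain independent, whereas the $k=5$ application used in Theorem~\ref{thm:twoforest} allows $B_0$ to spread over $k-\ell-1=2$ classes.

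I expect no serious obstacle here, as the result is essentially a direct corollary: the only subtleties are matching the arithmetic of $q$ to the stated bound and correctly reading off the structural guarantee on $B_0$ from part~(iv). The one point worth stating carefully is that an induced subgraph of a forest is a forest and that an induced subgraph of an independent set is independent, so that restricting the color classes to the pieces $B_1,B_2,B_0$ preserves the forest and independence properties.
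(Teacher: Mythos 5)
Your proposal is correct and matches the paper's (implicit) argument exactly: the paper derives this corollary precisely by feeding the Borodin--Ivanova / Chen--Raspaud acyclic $4$-coloring into Proposition~\ref{prop:main} with $k=4$ and $\ell=2$, so that $q=\lfloor\frac{2}{5}(n+1)\rfloor$ and $B_0$ lies in a single (independent) color class. The arithmetic and the use of conclusion~(iv) are all as intended.
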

By the four color theorem, 
Corollary~\ref{cor:k-1} implies that every planar graph 
    admits an equitable $3$-partition
    into bipartite graphs.
We also deduce the following variant.

\begin{cor}
   Every $n$-vertex planar graph 
   admits a partition of its vertex set
   into three sets $A_1$, $A_2$, $A_3$
   such that
   \begin{enumerate}[(i)]
    \item  both $A_1$ and $A_2$ induce bipartite subgraphs and    $\abs{A_1}, \abs{A_2}\ge \lfloor \frac{2}{5}(n+1)\rfloor$, 
    \item $A_3$ is independent.
   \end{enumerate}
\end{cor}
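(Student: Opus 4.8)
The plan is to deduce this corollary directly from Proposition~\ref{prop:main}, in exactly the same way that the preceding corollary (for planar graphs with no cycle of length $4$ or $5$) was deduced, but replacing the acyclic $4$-coloring by an ordinary proper $4$-coloring. First I would invoke the four color theorem to obtain a partition of $V(G)$ into four independent sets $A_1, A_2, A_3, A_4$. The key observation is that for any $i\ne j$, the union $A_i\cup A_j$ induces a properly $2$-colorable, and hence bipartite, subgraph of $G$. Thus we are in the situation of Proposition~\ref{prop:main}, where being ``a subset of the union of two members of $\{A_1,\ldots,A_k\}$'' now guarantees that the induced subgraph is bipartite.

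Next I would apply Proposition~\ref{prop:main} with $k:=4$ and $\ell:=2$. Then
\[
q=\left\lfloor \frac{2}{k+\ell-1}\left(n+\frac{k-\ell}{2}\right)\right\rfloor=\left\lfloor \frac{2}{5}(n+1)\right\rfloor,
\]
and the proposition yields a partition $(B_0,B_1,B_2)$ of $V(G)$. By (i), each of $B_1$ and $B_2$ is contained in the union of two of the $A_i$, so both $G[B_1]$ and $G[B_2]$ are bipartite; by (ii) and (iii) we get $\abs{B_1},\abs{B_2}\ge q=\lfloor \frac{2}{5}(n+1)\rfloor$. Finally, by (iv) there exists $I\subseteq\{1,2,3,4\}$ with $\abs{I}=k-\ell-1=1$ and $B_0\subseteq\bigcup_{i\in I}A_i$; since $I$ is a singleton, $B_0$ lies in a single color class and is therefore independent. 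Setting $A_1:=B_1$, $A_2:=B_2$, and $A_3:=B_0$ then gives the desired partition.

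I do not expect any genuine obstacle here: once Proposition~\ref{prop:main} is in hand, the entire content is the bookkeeping that the choice $\ell=2$ forces $\abs{I}=1$, which is precisely what turns the ``leftover'' part $B_0$ into an \emph{independent} set rather than merely a bipartite one. The only external input is the four color theorem, used exactly as acyclicity was used in the forest version, and the arithmetic with $q$ parallels the computation already carried out in the proof of Theorem~\ref{thm:twoforest} (where $k=5$, $\ell=2$ produced $q=\lfloor (n+1)/3\rfloor$). Hence the main care point is simply keeping the indices straight, so that the single-color-class conclusion (iv) is correctly matched to the independence requirement on $A_3$.
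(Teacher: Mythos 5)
Your proof is correct and is exactly the argument the paper intends: the paper leaves this corollary's proof implicit, deducing it from the four color theorem and Proposition~\ref{prop:main} with $k=4$, $\ell=2$ in the same way the preceding corollary uses an acyclic $4$-coloring. Your computation of $q=\lfloor\frac{2}{5}(n+1)\rfloor$ and the observation that $\abs{I}=1$ forces $B_0$ to be independent are both right.
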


A \emph{linear forest} is a forest of maximum degree at most $2$.
Cai, Xie, and Yang~\cite{CXY2012} showed that for every planar graph $G$, its vertices
can be colored by $\Delta(G)+7$ colors such that 
the union of any two color classes induces a linear forest.
Combined with Proposition~\ref{prop:main} and Corollary~\ref{cor:k-1}, we deduce that every planar graph $G$ 
admits an equitable $(\Delta(G)+6)$-partition 
into linear forests.

\end{document}